\numberwithin{equation}{section}
\theoremstyle{plain}
\newtheorem{theorem}{Theorem}[section]
\newtheorem{corollary}[theorem]{Corollary}
\newtheorem{proposition}[theorem]{Proposition}
\newtheorem{lemma}[theorem]{Lemma}
\theoremstyle{remark}
\newtheorem{example}[theorem]{Example}
\newtheorem*{ack}{Acknowledgement}     
\newcounter{counter}
\newenvironment{labeledlist}[1]
{
  
  \begin{list}{({#1}\arabic{counter})}{\usecounter{counter}}
}
{
  \end{list}
}
\newcommand{\R}{\mathbb{R}}
\newcommand{\N}{\mathbb{N}}
\newcommand{\HH}{\mathcal{H}}
\newcommand{\hhh}{\mathtt{h}}
\newcommand{\iii}{\mathtt{i}}
\newcommand{\jjj}{\mathtt{j}}
\newcommand{\roo}{\varrho}
\newcommand{\fii}{\varphi}
\newcommand{\ksi}{\xi}
\newcommand{\as}{\underline{s}}
\newcommand{\ys}{\overline{s}}
\DeclareMathOperator{\dimh}{dim_H}
\DeclareMathOperator{\dimt}{dim_T}
\DeclareMathOperator{\dist}{dist}
\DeclareMathOperator{\diam}{diam}
\DeclareMathOperator{\inter}{int}
\begin{document}

\title{Geometric rigidity of a class of fractal sets}

\author{Antti K\"aenm\"aki} 

\address{Department of Mathematics and Statistics \\
         P.O. Box 35 (MaD) \\
         FIN-40014 University of Jyv\"askyl\"a \\
         Finland}
\email{antakae@maths.jyu.fi}

\subjclass[2000]{Primary 28A80; Secondary 37C45.}
\keywords{iterated function systems, geometric rigidity,
  local geometric structure, self-conformal sets}
\date{\today}

\begin{abstract}
  We study geometric rigidity of a class of fractals, which is
  slightly larger than the collection of self-conformal sets. Namely,
  using a new method, we shall prove that a set of this class is
  contained in a smooth submanifold or is totally spread out.
\end{abstract}

\maketitle

\section{Introduction}

We study limit sets of certain iterated function systems on $\R^d$. A
self-conformal set is a limit set of an iterated function system in
which the mappings are conformal on a neighborhood of the limit
set. To define the class of limit sets we are interested in, we use
mappings that are required to be conformal only on the limit set. With
the conformality here, we mean that the derivative of the mapping is
an orthogonal transformation. This class is larger than the collection
of self-conformal sets.

To illustrate the type of results we are interested in, we recall
the following known theorems dealing with self-conformal sets.
The latter one is a generalization of Mattila's rigidity
theorem for self-similar sets (\cite[Corollary 4.3]{PM2}).
The method we use in this paper delivers a new proof and
generalization of these
theorems. To find other rigidity results of similar kind,
the reader is referred to \cite{MMU} and \cite{OS}. Let $E$ be a
self-conformal set, $\HH^t$ denote the $t$-dimensional Hausdorff
measure, and $\dimt$ and $\dimh$ be the topological dimension and the
Hausdorff dimension, respectively.

\begin{theorem}[Mayer and Urba\'nski \mbox{\cite[Corollary 1.3]{VMU}}]
  \label{thm:mayerurbanski}
  Suppose $l = \dimt(E)$. Then either

  (1) $\dimh(E) > l$ or

  (2) $E$ is contained in an $l$-dimensional
  affine subspace or an $l$-dimensional geometric sphere whenever $d$
  exceeds $2$ and if $d$ equals $2$, $E$ is contained in an analytic
  curve.
\end{theorem}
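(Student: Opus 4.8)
The plan is to establish the dichotomy by assuming that alternative~(1) fails and deducing~(2). Since the topological dimension never exceeds the Hausdorff dimension (Szpilrajn's theorem), the negation of~(1) forces $\dimh(E) = l = \dimt(E)$, so the entire argument rests on understanding what the coincidence of an integer-valued topological dimension with the Hausdorff dimension imposes on a self-conformal set. The guiding principle is that such coincidence prohibits genuine fractality and must instead manifest as local flatness, which the conformal self-maps then propagate into a rigid global structure.

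First I would use the bounded distortion property of conformal iterated function systems to show that at the critical exponent $l$ one has $0 < \HH^l(E) < \infty$, placing $E$ at an integer dimension with positive and finite measure. I would then argue that a set with $\dimt(E) = \dimh(E) = l$ must be $l$-rectifiable on a set of positive $\HH^l$-measure; heuristically, any purely unrectifiable portion spreads transversally at all scales and would either raise $\dimh(E)$ above $l$ or destroy the clean $l$-dimensional topological structure, contradicting the coincidence. From rectifiability, standard geometric measure theory (existence of approximate tangent $l$-planes for rectifiable sets, as in Mattila) produces a point $x_0 \in E$ at which $E$ possesses an approximate tangent $l$-plane $V$.

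Next I would exploit self-conformality to transport this local flatness everywhere. Around any point of $E$ the compositions $\fii_{i_1} \circ \cdots \circ \fii_{i_n}$ place arbitrarily small conformal copies of the whole set, each carrying a conformal image of the tangent plane at $x_0$. Because the derivatives are orthogonal, the maps are angle preserving, so in dimension $d \geq 3$ Liouville's theorem identifies them with M\"obius transformations, which send the family of affine subspaces and geometric spheres into itself. The approximate tangent $l$-plane at $x_0$ is thereby carried to a dense family of scales and locations as pieces of $l$-spheres or $l$-planes, and patching these forces all of $E$ to lie on a single geometric sphere or affine subspace. When $d = 2$ the maps are holomorphic rather than M\"obius, and the same transport argument places $E$ inside the analytic curve obtained by analytically continuing the tangent piece.

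The main obstacle will be the globalization step: upgrading an \emph{approximate}, almost-everywhere tangent plane at one point to \emph{exact} containment of $E$ in a single sphere or plane. Approximate tangency is an infinitesimal measure-theoretic statement, whereas containment in a sphere is a rigid everywhere statement, and bridging this gap requires simultaneously controlling the distortion of the compositions $\fii_{i_1} \circ \cdots \circ \fii_{i_n}$ and exploiting that the spheres-and-planes form the \emph{unique} conformally invariant family of flats when $d \geq 3$. Treating $d = 2$ separately, where Liouville fails and one obtains only analyticity rather than the sphere/plane rigidity, is the other delicate point.
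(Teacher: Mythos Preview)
This theorem is quoted in the paper as a known result of Mayer and Urba\'nski and is not proved here directly; what the paper supplies is a new route to a \emph{generalization} (the first Corollary after Proposition~\ref{thm:c1}), whose conclusion in the broader class is only containment in a $C^1$-submanifold. That route is: use \cite[Lemma~2.1]{VMU} together with $\HH^l(E)>0$ to obtain a \emph{weak} $(l,l)$-tangent plane at a single point, apply Theorem~\ref{thm:tangential} to upgrade one weak tangent to uniform $l$-tangentiality of all of $E$, and then invoke Whitney's extension theorem (Proposition~\ref{thm:c1}). For genuinely self-conformal $E$ the final sphere/plane (respectively analytic-curve) rigidity is then recovered from Liouville's theorem as in the original Mayer--Urba\'nski argument.

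Your outline has the same global shape --- produce a tangent, propagate it by the dynamics, finish with Liouville --- but you have misidentified where the difficulty lies. The step you treat as routine, namely deducing from $\dimt(E)=\dimh(E)=l$ and $0<\HH^l(E)<\infty$ that $E$ is $l$-rectifiable on a set of positive measure, is not a general theorem: equality of topological and Hausdorff dimensions does \emph{not} force rectifiability, and the heuristic that a purely unrectifiable piece ``would raise $\dimh$ above $l$'' is simply false (purely unrectifiable $l$-sets exist with $\dimh=l$). This is precisely the gap that \cite[Lemma~2.1]{VMU} fills, and it does so by exploiting the invariance of $E$ under the iterated function system and the conformal measure, not by abstract geometric measure theory. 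Conversely, the globalization you flag as the main obstacle --- passing from an approximate tangent at one point to exact containment --- is handled by the paper's soft mechanism: Theorem~\ref{thm:tangential} turns a single weak tangent into uniform tangentiality in one stroke, Whitney then gives a $C^1$-submanifold, and no delicate patching of spheres is required before the final appeal to Liouville.
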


\begin{theorem}[K\"aenm\"aki \mbox{\cite[Theorem 2.1]{AK1}}] \label{thm:kaenmaki}
  Suppose $t = \dimh(E)$ and $0<l<d$. Then either

  (1) $\HH^t(E \cap M)=0$ for every $l$-dimensional
  $C^1$-submanifold $M \subset \R^d$ or

  (2) $E$ is contained in an $l$-dimensional
  affine subspace or an $l$-dimensional geometric sphere whenever $d$
  exceeds $2$ and if $d$ equals $2$, $E$ is contained in an analytic
  curve.
\end{theorem}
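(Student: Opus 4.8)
The plan is to establish the dichotomy in its contrapositive form: assuming that alternative (1) fails, so that $\HH^t(E\cap M)>0$ for some $l$-dimensional $C^1$-submanifold $M\subset\R^d$, I would deduce the rigid conclusion (2). The two structural inputs I would rely on are, first, the \emph{bounded distortion property} of a conformal iterated function system $\{\fii_i\}$ generating $E$: the compositions $\fii_{\mathbf{i}}=\fii_{i_1}\circ\cdots\circ\fii_{i_n}$ have derivatives that are, up to a uniform multiplicative constant independent of the word $\mathbf{i}$, similarities of a well-defined ratio, so that each cylinder $\fii_{\mathbf{i}}(E)$ is a copy of $E$ distorted by a controlled conformal factor; and second, the resulting comparability of $\HH^t|_E$ with the natural self-conformal measure, which makes $\HH^t|_E$ a doubling measure with positive and finite upper and lower densities at $\HH^t$-almost every point.

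Next I would carry out a blow-up at a typical point. Fix an $x\in E\cap M$ that is simultaneously a density point of $E\cap M$ relative to $\HH^t|_E$ and a point at which $M$ possesses an $l$-dimensional affine tangent plane $V=T_xM$; almost every point of $E\cap M$ is such, since $M$ is $C^1$ and $\HH^t|_E$ is doubling. Using the cylinder structure together with bounded distortion, I would select a sequence of words $\mathbf{i}_n$ with $x\in\fii_{\mathbf{i}_n}(E)$ and $\diam\fii_{\mathbf{i}_n}(E)\to 0$ of bounded eccentricity, and let $L_n$ be the similarity normalizing $\fii_{\mathbf{i}_n}(E)$ to unit size about $x$. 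Passing to a subsequence, the conformal maps $L_n\circ\fii_{\mathbf{i}_n}$ converge to a nondegenerate conformal map $\Phi$ (nondegeneracy being forced by the bounded distortion normalization), so that $L_n\bigl(E\cap B(x,r_n)\bigr)$ converges in the Hausdorff metric to $\Phi(E)$. The density hypothesis simultaneously forces the rescaled sets $L_n(E\cap M)$ to exhaust this limit, while $L_n(M)\to V$, an $l$-plane; hence the whole tangent copy satisfies $\Phi(E)\subset V$.

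To finish I would invoke the classification of conformal maps. For $d\ge 3$, Liouville's theorem identifies $\Phi$ with a M\"obius transformation, which carries the $l$-plane $V$ to an $l$-dimensional affine subspace or an $l$-dimensional geometric sphere; applying $\Phi^{-1}$ to $\Phi(E)\subset V$ then places $E=\Phi^{-1}(\Phi(E))$ inside such a round $l$-surface, which is exactly alternative (2). For $d=2$ (so that $0<l<d$ forces $l=1$), $\Phi$ is a conformal, hence locally analytic, homeomorphism, and $\Phi^{-1}(V)$ of the line $V$ is an analytic curve containing $E$.

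The step I expect to be the main obstacle is the density upgrade in the second paragraph: turning the mere positivity $\HH^t(E\cap M)>0$ into the exact inclusion $\Phi(E)\subset V$ of the full tangent copy. This requires matching the Euclidean ball scales $r_n$ to the intrinsic cylinder scales $\diam\fii_{\mathbf{i}_n}(E)$ --- feasible only because bounded distortion makes cylinders and balls comparable --- and controlling that the limiting map $\Phi$ does not degenerate under the rescaling. A secondary delicate point is ensuring that $\{\fii_i\}$ can be chosen so that bounded distortion and the comparability of $\HH^t|_E$ with the self-conformal measure genuinely hold, which is where the hypothesis $0<l<d$ and the conformality of the generators enter.
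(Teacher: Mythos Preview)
Your blow-up argument is sound and is close in spirit to the original reference \cite{AK1}, but it is not the route this paper takes. Here Theorem~\ref{thm:kaenmaki} is quoted as prior work; the paper's own method, culminating in the final corollary, proceeds instead by (i) extracting from $\HH^t(E\cap M)>0$ a single point of $E$ with a weak $(t,l)$-tangent plane (via \cite[Lemma~2.2]{AK1}), (ii) propagating this through Theorem~\ref{thm:tangential} and the small-angle Lemma~\ref{thm:smallangles} to conclude that $E$ is uniformly $l$-tangential, and (iii) applying Whitney extension (Proposition~\ref{thm:c1}) to embed $E$ in an $l$-dimensional $C^1$-submanifold. No limiting conformal map $\Phi$ is ever produced and Liouville's theorem is not invoked. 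This buys generality---the argument extends to the broader class satisfying \ref{F1} and \ref{F3}, where the maps are conformal only on $E$ and a M\"obius limit of the renormalized compositions need not exist---but at the price of reaching only the $C^1$-submanifold conclusion rather than the sharp affine/sphere/analytic-curve alternative. Your approach, conversely, requires genuine global conformality for the normal-family compactness and the Liouville classification, and in return delivers the full strength of alternative~(2); the density-upgrade step you flag as the main obstacle is exactly where the Ahlfors regularity of the conformal measure (the analogue of \eqref{eq:puoli_ahlfors}) must be used to turn ``most of the blow-up lies in $V$'' into ``all of $\Phi(E)$ lies in $V$''.
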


Our aim is to prove results of similar kind for the previously
mentioned class of limit sets. We define the class rigorously in the
next chapter.

\section{Class of fractal sets}

We consider the sets obtained as geometric projections of the symbol
space $I^\infty$: Take a finite set $I$ with at least two elements and
set $I^* = \bigcup_{n=1}^\infty I^n$ and $I^\infty = I^\N$. If
$\iii \in I^*$ and $\jjj \in I^* \cup I^\infty$, then with the
notation $\iii,\jjj$ we mean the element obtained by juxtaposing the
terms of $\iii$ and $\jjj$. The \emph{length} of $\iii$, that is, the
number of terms in $\iii$, is denoted by $|\iii|$. Let $X \subset
\R^d$ be a compact set and choose a collection $\{ X_\iii : \iii \in
I^* \}$ of nonempty closed subsets of $X$ satisfying
\begin{labeledlist}{L}
  \item $X_{\iii,i} \subset X_\iii$ for every $\iii
  \in I^*$ and $i \in I$, \label{L1}
  \item $\diam(X_\iii) \to 0$ as $|\iii| \to \infty$. \label{L2}
\end{labeledlist}
Now the \emph{projection mapping} is the function $\pi \colon I^\infty
\to X$ for which
\begin{equation*}
  \{ \pi(\iii) \} = \bigcap_{n=1}^\infty X_{\iii|_n}
\end{equation*}
when $\iii \in I^\infty$. The compact set $E = \pi(I^\infty)$ is
called a \emph{limit set}.

Since this setting is too general to study the geometry, we assume the
limit set is constructed by using sets of the form $X_\iii =
\fii_\iii(X)$, where $\fii_\iii = \fii_{i_1} \circ \cdots \circ
\fii_{i_{|\iii|}}$
for $\iii = (i_1,\ldots,i_{|\iii|}) \in I^*$ and the mappings $\fii_i$
belong into the following category: Suppose $\Omega' \subset \R^d$ is open and
$\Omega$ is open and bounded such that $\overline{\Omega} \subset
\Omega'$ and $X \subset \Omega$. We consider mappings $\fii \in
C^2(\Omega')$ for which $\fii(X) \subset X$ and
\begin{labeledlist}{F}
  \item there exist constants $0 < \as,\ys < 1$ for which $\ys^2 \le
  \as$ and
    \begin{equation*}
      \as \le |(\fii'(x))^{-1}|^{-1} \le
      |\fii'(x)| \le \ys
    \end{equation*}
    when $x \in \Omega$, \label{F1}
  \item the derivative of $\fii$ is an orthogonal transformation on
  $E$, that is,
    \begin{equation*}
      |(\fii'(x))^{-1}|^{-1} = |\fii'(x)|
    \end{equation*}
    when $x \in E$. \label{F3}
\end{labeledlist}
Here $| \,\cdot\, |$ denotes the usual operator norm for linear
mappings. Furthermore, we set $|| \fii_\iii' || = \sup_{x \in
  \Omega}|\fii_\iii'(x)|$.

For example, each contractive conformal mapping satisfies both
assumptions \ref{F1} and \ref{F3}. At first glance, it might seem that
requiring mappings that define the limit set to be conformal on the
limit set, to be a very restrictive assumption for nonconformal
mappings. In the following, we shall give an example of a nonconformal
setting.

\begin{example} \label{example}
  Suppose the mappings $\fii_1,\ldots,\fii_k$ defined on an open
  set $\Omega' \subset \R^d$ are conformal (see \cite[page 22]{YR} for
  definition) and contractive on an open
  and bounded set $\Omega$ for which $\overline{\Omega} \subset
  \Omega'$. Assume also that there is a compact set $X \subset \Omega$
  such that $\fii_i(X) \subset X$ for each $i \in \{ 1,\ldots,k
  \}$. The limit set $E$ associated to this setting is called a
  \emph{self-conformal set}. Furthermore, we require that $\max_i
  ||\fii_i'||^2 \max_i ||(\fii_i^{-1})'|| < 1$.

  Next choose $\max_i ||\fii_i'|| < \ys < 1$ and $0 < \as < (\max_i
  ||(\fii_i^{-1})'||)^{-1}$ such that $\ys^2 < \as$. Suppose 
  $h \colon \R^d \to \R^d$ is a $C^2$ diffeomorphism such that
  it is conformal on $E$. We assume also that
  \begin{equation} \label{eq:h_oletus}
    1 \le ||h'|| \, ||(h^{-1})'|| \le \min\left\{ \frac{\ys}{\max_i
    ||\fii_i'||}, \frac{1}{\as\max_i ||(\fii_i^{-1})'||} \right\}.
  \end{equation}

  Define $\tilde\fii_i = h \circ \fii_i \circ h^{-1}$ for every $i \in
  \{ 1,\ldots,k \}$ and set $\tilde\Omega' = h(\Omega')$,
  $\tilde\Omega = h(\Omega)$, and $\tilde X = h(X)$. Since
  $\tilde\fii_i(\tilde X) \subset \tilde X$ for every $i$, the assumption
  \ref{L1} is satisfied for the collection $\{ \tilde\fii_\iii(\tilde
  X)$ : $\iii \in I^*$ \}. We claim that also the assumption \ref{L2} is
  satisfied and the mappings $\tilde\fii_i$ satisfy the assumptions
  \ref{F1} and \ref{F3}. To see this, notice that
  \begin{equation} \label{eq:tilde_omin}
  \begin{split}
    |\tilde\fii_i'(x)| &\le |(h'(h^{-1}(x)))^{-1}| |h'(\fii_i \circ
    h^{-1}(x))| |\fii_i'(h^{-1}(x))|, \\
    |(\tilde\fii_i'(x))^{-1}| &\le |h'(h^{-1}(x))| |(h'(\fii_i \circ
    h^{-1}(x)))^{-1}| |(\fii_i'(h^{-1}(x)))^{-1}|
  \end{split}
  \end{equation}
  for every $x \in \tilde\Omega$. The condition \ref{F1}, and hence also
  the condition \ref{L2}, can now be verified by using
  \eqref{eq:h_oletus}. Denoting the limit set associated to this
  setting with $\tilde E$, it is straightforward to see that $\tilde E
  = h(E)$. Assumptions on $h$ guarantee that the equations in
  \eqref{eq:tilde_omin} hold with equality provided that $x \in
  \tilde E$. Therefore also \ref{F3} holds.

  \begin{figure}[!t]
  \psfrag{h}{$h$}
  \begin{center}
  \includegraphics[scale=0.6]{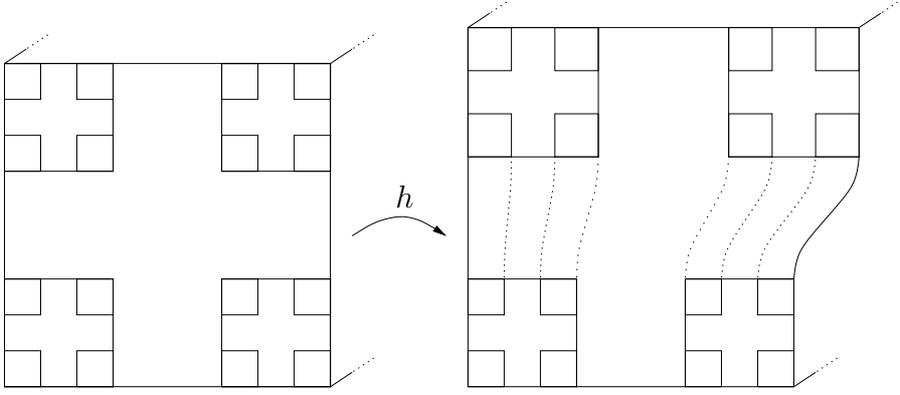}
  \end{center}
  \caption{A nonconformal example.}
  \label{fig:nonconformal}
  \end{figure}

  The class of limit sets obtained by this method clearly includes all
  the self-conformal sets. Since the collection of mappings that
  generate the limit set is not necessarily unique,
  we shall
  next give an example of a self-conformal set $E$ and a mapping $h$
  such that there are no conformal mappings having $h(E)$ as the limit
  set. Let $E$ be the usual Cantor dust on $\R^3$, that is, $E = C^3$,
  where $C$ is the middle third Cantor set on the unit interval. 
  Define $h \colon \R^3 \to \R^3$ such that $h(x,y,z) = g(z)(x,y,z)$,
  where $g$ is an increasing $C^2$ function with the following
  properties: $g' <  c_1$, $g \equiv 1$ on $[0,\tfrac13]$ and $g
  \equiv c_2$ on $[\tfrac23,1]$, see Figure
  \ref{fig:nonconformal}. Now, with suitable choices of
  $0<\as<\tfrac13$, $\tfrac13<\ys<1$, $c_1>0$, and $c_2>1$, the
  mapping $h$ satisfies the condition \eqref{eq:h_oletus}.
  If the set $h(E)$ were a limit set of a collection of conformal
  mappings, it would be invariant with respect to these
  mappings. Hence there exists a conformal mapping taking a cylinder
  set small enough (if $\Omega$ is connected, then a first level
  cylinder would suffice) to the whole set $h(E)$ such that the image
  of a $2$-dimensional affine subspace containing one side of the small
  cylinder set includes sides of two first level cylinder sets located
  in two distinct $2$-dimensional affine subspaces (the sides on the
  right in Figure \ref{fig:nonconformal}). According to Liouville's
  Theorem (for example, see \cite[Theorem 4.1]{YR}) this is not possible.
  Therefore, the class of limit sets obtained by this method is strictly
  larger than the collection of all self-conformal sets.
\end{example}


To avoid too much overlapping among
the sets $\fii_\iii(X)$, we assume the open set condition, that is,
$\fii_i\bigl( \inter(X) \bigr) \cap \fii_j\bigl( \inter(X)
\bigr) = \emptyset$ for $i \ne j$, and the existence of $\roo_0 > 0$
for which
\begin{equation} \label{eq:bc}
  \inf_{x \in \partial X} \inf_{0<r<\roo_0} \frac{\HH^d\bigl( B(x,r)
  \cap \inter(X) \bigr)}{\HH^d\bigl( B(x,r) \bigr)} > 0,
\end{equation}
where $\partial X$ denotes the boundary of $X$. These assumptions are
crucial in determining the conformal measure, see \eqref{eq:3.x}.
From now on, without mentioning it explicitly, this is the setting we
are working with.

As a consequence of the assumption \ref{F1}, we have the following
proposition. Observe that the assumption \ref{F3} is not needed here.

\begin{proposition}[Falconer \mbox{\cite[Proposition 4.3]{KF}}]
  \label{thm:falconer}
  There exists a constant $c > 0$ such that
  \begin{equation*}
    |\fii_\iii'(x) - \fii_\iii'(y)| \le c|\fii_\iii'(x)||x-y|
  \end{equation*}
  for every $\iii \in I^*$ and $x,y \in \Omega$.
\end{proposition}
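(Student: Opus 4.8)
The plan is to differentiate $\fii_\iii=\fii_{i_1}\circ\cdots\circ\fii_{i_n}$ by the chain rule and then control the resulting product of Jacobian matrices by a telescoping argument. Writing $\iii=(i_1,\dots,i_n)$, I would set $a_k=\fii_{(i_{k+1},\dots,i_n)}(x)$ and $b_k=\fii_{(i_{k+1},\dots,i_n)}(y)$ for $1\le k\le n$ (with $a_n=x$ and $b_n=y$), together with $A_k=\fii_{i_k}'(a_k)$ and $B_k=\fii_{i_k}'(b_k)$, so that $\fii_\iii'(x)=A_1\cdots A_n$ and $\fii_\iii'(y)=B_1\cdots B_n$. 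The difference then factors telescopically as
\[
  \fii_\iii'(x)-\fii_\iii'(y)=\sum_{k=1}^n A_1\cdots A_{k-1}\,(A_k-B_k)\,B_{k+1}\cdots B_n,
\]
with empty products read as the identity, and it suffices to bound each summand by a multiple of $|\fii_\iii'(x)|\,|x-y|$ whose constants sum to something finite and independent of $n$.

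For this I would assemble four uniform estimates, using only \ref{F1}. First, since every $\fii_i$ is $C^2$ on $\Omega'$ and $\overline\Omega\subset\Omega'$ is compact, the finitely many maps $\fii_i'$ are Lipschitz on $\Omega$ with a common constant $C_0$; hence $|A_k-B_k|\le C_0|a_k-b_k|$. Second, the upper bound $|\fii_i'|\le\ys$ makes each $\fii_i$ a $\ys$-contraction, so $|a_k-b_k|\le\ys^{\,n-k}|x-y|$ and $|B_{k+1}\cdots B_n|\le\ys^{\,n-k}$. Third, the lower bound in \ref{F1} reads $\sigma_{\min}(A_j)=|(A_j)^{-1}|^{-1}\ge\as$, whence $\sigma_{\min}(A_k\cdots A_n)\ge\as^{\,n-k+1}$. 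Fourth, the algebraic inequality linking the head $A_1\cdots A_{k-1}$ to the full product is $\sigma_{\max}(MN)\ge\sigma_{\max}(M)\,\sigma_{\min}(N)$, valid since each $A_j$ is invertible; applied with $M=A_1\cdots A_{k-1}$ and $N=A_k\cdots A_n$ it gives $|A_1\cdots A_{k-1}|\le|\fii_\iii'(x)|\,\as^{-(n-k+1)}$.

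Combining the four bounds, the $k$-th summand is at most
\[
  |A_1\cdots A_{k-1}|\,|A_k-B_k|\,|B_{k+1}\cdots B_n|\le\frac{C_0}{\as}\Bigl(\frac{\ys^2}{\as}\Bigr)^{n-k}|\fii_\iii'(x)|\,|x-y|,
\]
the head $A_1\cdots A_{k-1}$ having cancelled against the lower bound for the full product. Summing over $k$ produces a geometric series with ratio $\ys^2/\as$, which the hypothesis $\ys^2\le\as$ keeps at most one; as long as it is strictly below one the series sums to $C_0/(\as-\ys^2)$ uniformly in $n=|\iii|$, and the proposition follows with $c=C_0/(\as-\ys^2)$.

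The hard part — and the only place where non-conformality really bites — is the fourth estimate, that is, controlling the partial product $A_1\cdots A_{k-1}$ by the norm of the \emph{full} product $\fii_\iii'(x)=A_1\cdots A_n$. For scalar or conformal derivatives one could simply telescope the ratios $|\fii_{i_k}'(a_k)|/|\fii_{i_k}'(b_k)|$, but here the $A_k$ are genuine matrices and the tail $A_{k+1}\cdots A_n$ may have eccentricity as large as $(\ys/\as)^{\,n-k}$. It is precisely the pinching condition $\ys^2\le\as$ that prevents this eccentricity from outgrowing the contraction, so that after dividing by $\sigma_{\min}(A_k\cdots A_n)$ one obtains a summable geometric series in $\ys^2/\as$ rather than a bound that degrades with the length of $\iii$. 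Identifying the sharp role of this pinching inequality is, to my mind, the crux of the argument.
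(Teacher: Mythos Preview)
Your argument is precisely the one Falconer gives (the paper does not supply its own proof but simply cites \cite[Proposition~4.3]{KF}); the chain-rule factorisation, the telescoping identity for $A_1\cdots A_n-B_1\cdots B_n$, and the four estimates you list are exactly the ingredients of that proof, and your identification of the bound $|A_1\cdots A_{k-1}|\le|\fii_\iii'(x)|\,\as^{-(n-k+1)}$ via $\sigma_{\max}(MN)\ge\sigma_{\max}(M)\sigma_{\min}(N)$ is the correct non-conformal substitute for the scalar bounded-distortion step.

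The one point to flag is the boundary case $\ys^2=\as$, which you yourself note. Your geometric series has ratio $\ys^2/\as$, so when equality holds the sum is $n$, not a constant, and the argument as written gives no uniform bound in $|\iii|$. This is not a flaw in your strategy but a mismatch between the paper's stated hypothesis \ref{F1} (which allows $\ys^2\le\as$) and Falconer's original, where the pinching condition is strict. One cannot simply perturb the constants to recover strict inequality: if, say, $|\fii'(x)|\equiv\ys$ and $|(\fii'(x))^{-1}|^{-1}\equiv\ys^2$ on $\Omega$, no admissible pair $(\as,\ys)$ with $\ys^2<\as$ exists. So either the paper intends strict inequality in \ref{F1}, or the citation is slightly optimistic at the boundary; in any case your proof is complete under the strict hypothesis $\ys^2<\as$, which is all that Falconer's proposition actually assumes.
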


As a corollary, Falconer \cite[Corollary 4.4]{KF} shows that there exists
a bounded function $1 \le K(t) \le K_0$, $K(t) \to 1$ as $t
\to 0$, such that
\begin{equation} \label{eq:BDP}
\begin{split}
  |\fii_\iii'(x)| &\le K(|x-y|)\, |\fii_\iii'(y)|, \\
  |(\fii_\iii'(x))^{-1}|^{-1} &\le K(|x-y|)\, |(\fii_\iii'(y))^{-1}|^{-1}
\end{split}
\end{equation}
for every $\iii \in I^*$ and $x,y \in \Omega$.
In the following, $B(a,r)$ denotes the open ball centered at $a \in
\R^d$ with radius $r>0$. The closed ball is denoted by
$\overline{B}(a,r)$ whereas the closure of a given set $A$ is denoted
with $\overline{A}$. The boundary of $A$ is denoted by $\partial
A$. Finally, we set $[x,y] = \{ \lambda x + (1-\lambda)y : 0 \le
\lambda \le 1 \}$.

\begin{lemma} \label{thm:bdpomin}
  (1) If $x \in E$, then
  \begin{equation*}
    B\bigl( \fii_\iii(x),K_0^{-1}|\fii_\iii'(x)|r \bigr) \subset
    \fii_\iii\bigl( B(x,r) \bigr)
  \end{equation*}
  for every $\iii \in I^*$ and $0 < r < \dist(E,\partial\Omega)$.

  (2) If $x \in X$, then
  \begin{equation*}
    \fii_\iii\bigl( B(x,r) \bigr) \subset B\bigl(
    \fii_\iii(x),||\fii_\iii'||r \bigr)
  \end{equation*}
  for every $\iii \in I^*$ and $0 < r < \dist(X,\partial\Omega)$.

  (3) There exists a constant $D \ge 1$ such that
  \begin{equation*}
    \diam\bigl( \fii_\iii(X) \bigr) \le D||\fii_\iii'||
  \end{equation*}
  for every $\iii \in I^*$.
\end{lemma}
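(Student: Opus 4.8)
The plan is to handle the three parts in increasing order of difficulty, the inclusion~(1) being the substantive one. Throughout I use that $||\fii_\iii'|| = \sup_{\Omega}|\fii_\iii'|$ and that the convex ball $B(x,r)$ lies in $\Omega$ whenever $x\in X$ and $0<r<\dist(X,\partial\Omega)$: a segment from $x\in X$ to a point outside $\Omega$ must meet $\partial\Omega$ at distance $\ge\dist(X,\partial\Omega)>r$ from $x$, so $B(x,r)$ cannot leave $\Omega$. Part~(2) is then a direct mean value estimate: for $y\in B(x,r)$ the whole segment $[x,y]$ stays in $\Omega$, and integrating $\fii_\iii'$ along it yields $|\fii_\iii(y)-\fii_\iii(x)|\le\sup_{[x,y]}|\fii_\iii'|\,|y-x|\le||\fii_\iii'||\,r$, which is the claim.

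For part~(1) I would first record that $\fii_i(E)\subset E$: if $y=\pi(\jjj)$ then $\fii_i(y)=\pi(i,\jjj)$ since $\fii_i(X_{\jjj|_n})=X_{(i,\jjj)|_{n+1}}$. Hence $\fii_\jjj(E)\subset E$ for every $\jjj\in I^*$, and writing $\fii_\iii'(x)$ by the chain rule as a product of factors $\fii_{i_k}'$ evaluated along the orbit of $x$ — all of whose points lie in $E$ when $x\in E$ — assumption~\ref{F3} shows $\fii_\iii'(x)$ is conformal, i.e.\ $|(\fii_\iii'(x))^{-1}|^{-1}=|\fii_\iii'(x)|$. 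Now fix $x\in E$ and $0<r<\dist(E,\partial\Omega)$, so that $\overline{B}(x,r)\subset\Omega$. The second line of \eqref{eq:BDP} gives $|(\fii_\iii'(z))^{-1}|^{-1}\ge K_0^{-1}|(\fii_\iii'(x))^{-1}|^{-1}=K_0^{-1}|\fii_\iii'(x)|=:m$ for all $z\in\overline{B}(x,r)$. To show $\fii_\iii(B(x,r))\supset B(\fii_\iii(x),mr)$ I would lift radii: given $w$ with $|w-\fii_\iii(x)|<mr$, solve $z'(s)=(\fii_\iii'(z(s)))^{-1}(w-\fii_\iii(x))$ with $z(0)=x$. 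Since $|(\fii_\iii'(z))^{-1}|\le m^{-1}$, the solution obeys $|z'(s)|\le m^{-1}|w-\fii_\iii(x)|<r$, so $z(s)$ never leaves $B(x,r)$ and the solution persists on $[0,1]$; along it $\tfrac{d}{ds}\fii_\iii(z(s))=w-\fii_\iii(x)$ is constant, whence $\fii_\iii(z(1))=w$ with $z(1)\in B(x,r)$. As $m=K_0^{-1}|\fii_\iii'(x)|$, this is exactly~(1).

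For part~(3) the idea is to avoid a letter-by-letter comparison, which would lose a distortion factor at each step. Using \ref{L2}, fix $n_1$ with $\diam(\fii_\kkk(X))<\dist(X,\partial\Omega)$ whenever $|\kkk|\ge n_1$, and for $|\iii|\ge n_1$ split $\iii=\alpha\jjj$ with $|\jjj|=n_1$. For $a,b\in X$ the points $\fii_\jjj(a),\fii_\jjj(b)$ lie in $\fii_\jjj(X)$ and are thus closer than $\dist(X,\partial\Omega)$, so the segment between them lies in $\Omega$ and the mean value estimate for $\fii_\alpha$ gives $|\fii_\iii(a)-\fii_\iii(b)|\le||\fii_\alpha'||\,\dist(X,\partial\Omega)$. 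It remains to bound $||\fii_\alpha'||$ by $||\fii_\iii'||$. Choosing $x_0\in X$, the chain rule gives $\fii_\iii'(x_0)=\fii_\alpha'(\fii_\jjj(x_0))\fii_\jjj'(x_0)$, and the elementary inequality $|AB|\ge|A|\,|B^{-1}|^{-1}$ together with \eqref{eq:BDP} for $\alpha$ and $|(\fii_\jjj'(x_0))^{-1}|^{-1}\ge\as^{\,n_1}$ from \ref{F1} yields $||\fii_\iii'||\ge|\fii_\iii'(x_0)|\ge K_0^{-1}\as^{\,n_1}||\fii_\alpha'||$. Hence $\diam(\fii_\iii(X))\le K_0\as^{-n_1}\dist(X,\partial\Omega)\,||\fii_\iii'||$ for $|\iii|\ge n_1$; the finitely many shorter words are absorbed by enlarging the constant, and taking $D$ to be the maximum of these quantities and $1$ finishes the proof. (Note that, unlike~(1), this argument never invokes \ref{F3}.)

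The main obstacle is the covering inclusion in~(1): a lower bound on the smallest singular value does not by itself force $\fii_\iii$ to be injective, so one cannot argue by a naive diameter comparison. The radius-lifting ODE circumvents this, using only the uniform bound $m$ to push the straight segment from $\fii_\iii(x)$ to $w$ back into $B(x,r)$; the sole role of \ref{F3} is to identify $m$ with $K_0^{-1}|\fii_\iii'(x)|$ rather than the a priori smaller $K_0^{-1}|(\fii_\iii'(x))^{-1}|^{-1}$.
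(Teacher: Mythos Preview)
Your argument is correct in all three parts. For (2) and (3) the paper simply says the proofs are routine and omits them; your treatments are sound and in the expected spirit. In particular, your handling of (3) --- waiting $n_1$ steps so that $\fii_\jjj(X)$ has diameter below $\dist(X,\partial\Omega)$, applying the mean value inequality to $\fii_\alpha$, and then recovering $\|\fii_\alpha'\|\le K_0\as^{-n_1}\|\fii_\iii'\|$ via $|AB|\ge|A|\,|B^{-1}|^{-1}$ together with \eqref{eq:BDP} and \ref{F1} --- is a clean way to get a constant independent of $|\iii|$.

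For (1) the paper takes a different route. It sets $r_1=\sup\{\rho: B(\fii_\iii(x),\rho)\subset\fii_\iii(B(x,r))\}$, picks $y\in\partial B(x,r)$ with $\fii_\iii(y)\in\partial B(\fii_\iii(x),r_1)$, and applies the Mean Value Theorem to $\fii_\iii^{-1}$ along the segment $[\fii_\iii(x),\fii_\iii(y)]\subset\overline{B}(\fii_\iii(x),r_1)$; the same combination of iterated \ref{F3} and \eqref{eq:BDP} that you isolate (your constant $m$, the paper's inequality \eqref{eq:multibdp}) then yields $r\le K_0|\fii_\iii'(x)|^{-1}r_1$. Your ODE lift of the straight segment from $\fii_\iii(x)$ to $w$ is a genuinely different device: it uses only the uniform lower bound on the smallest singular value and never invokes $\fii_\iii^{-1}$, so it goes through without assuming or first establishing injectivity of $\fii_\iii$ on $B(x,r)$. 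The paper's version is a touch shorter but tacitly needs a well-defined local inverse on the closed image ball; your path-lifting argument trades that for a standard continuation step and is, as you note, more robust. Both proofs rest on exactly the same analytic input, namely \eqref{eq:BDP} combined with the iterated form of \ref{F3}.
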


\begin{proof}
  We shall prove (1). The proofs of (2) and (3) are rather routine and
  will be omitted.
  Take $x \in E$, $\iii \in I^*$, and $0 < r <
  \dist(E,\partial\Omega)$. Iterating \ref{F3} and using
  \eqref{eq:BDP}, we have
  \begin{equation} \label{eq:multibdp}
    |\fii_\iii'(x)| \le K(|x-y|)\, |(\fii_\iii'(y))^{-1}|^{-1}
  \end{equation}
  when $y \in \Omega$. Let $r_1 > 0$ be the supremum of all radii for
  which $B\bigl( \fii_\iii(x),r_1 \bigr) \subset \fii_\iii\bigl( B(x,r)
  \bigr)$. Using now the Mean Value Theorem, we find, for each $z,w \in
  \overline{B}\bigl( \fii_\iii(x),r_1 \bigr)$ and $\theta \in \R^d$, a
  point $\ksi \in [z,w]$ such that
  \begin{equation*}
    \theta\cdot\bigl( \fii_\iii^{-1}(z) - \fii_\iii^{-1}(w) \bigr) =
    \theta\cdot\bigl( (\fii_\iii^{-1})'(\ksi)(z-w) \bigr).
  \end{equation*}
  Thus, choosing $\theta = (x - y) / |x - y|$, where $y \in \partial
  B(x,r)$ is such that $\fii_\iii(y) \in \partial B\bigl(
  \fii_\iii(x),r_1 \bigr)$, we get, using \eqref{eq:multibdp},
  \begin{equation} \label{eq:laskelma}
  \begin{split}
    r &= |x-y| = \bigl| \fii_\iii^{-1}\bigl( \fii_\iii(x) \bigr) -
    \fii_\iii^{-1}\bigl( \fii_\iii(y) \bigr) \bigr| \\
    &\le |(\fii_\iii^{-1})'(\ksi)||\fii_\iii(x) - \fii_\iii(y)| \\
    &= \bigl| \bigl( \fii_\iii'(\fii_\iii^{-1}(\ksi)) \bigr)^{-1}
    \bigr| |\fii_\iii(x) - \fii_\iii(y)| \\
    &\le K(|\fii_\iii^{-1}(\ksi) - x|)\, |\fii_\iii'(x)|^{-1}
    |\fii_\iii(x) - \fii_\iii(y)|,
  \end{split}
  \end{equation}
  where $\ksi \in [\fii_\iii(x),\fii_\iii(y)]$. Hence
  $K_0^{-1}|\fii_\iii'(x)|r \le r_1$, which finishes the proof.
%
\end{proof}

\section{Geometric rigidity}

We shall first set down some notation.
Let $0<l<d$ be an integer and $G(d,l)$ the collection of all
$l$-dimensional linear subspaces of $\R^d$. The orthogonal projection onto
$V \in G(d,l)$ is denoted by $P_V$. We denote the orthogonal complement
of $V$ with $V^\perp \in G(d,d-l)$ and the projection onto that
by $Q_V = P_{V^\perp}$. We can metricize $G(d,l)$ by identifying
$V \in G(d,l)$ with the projection $Q_V$ and defining for $V,W \in G(d,l)$
\begin{equation*}
  d(V,W) = |Q_V - Q_W|,
\end{equation*}
where $| \,\cdot\, |$ is the usual operator norm for linear mappings.
With this metric, $G(d,l)$ is compact.
Furthermore, we denote $V + \{ x \} = \{ v+x : v \in V \}$ for $x \in \R^d$
and $AV = \{ Av : v \in V \}$ for a nonsingular linear mapping
$A \colon \R^d \to \R^d$.

If $a \in \R^d$, $V\in G(d,l)$, $0<\delta < 1$, and $r>0$, we set
\begin{align*}
  X(a,V,\delta) &= \{ x \in \R^d : |Q_V(x-a)| < \delta^{1/2}|x-a| \}, \\
  X(a,r,V,\delta) &= X(a,V,\delta) \cap B(a,r), \\
  V_a(\delta) &= \{ x \in \R^d : |Q_V(x-a)| < \delta \}.
\end{align*}
Notice that the closure of $X(a,V,\delta)$ is the complement of
$X(a,V^{\perp},1-\delta)$. Salli \cite{AS} has shown that $d(V,W) =
\sup_{x \in V \cap S^{d-1}} \dist(x,W)$. Hence the set $X(0,V,\delta)$
is an open ball in $G(d,l)$ centered at $V$ with radius $\delta^{1/2}$.

For the purpose of verifying our main result, we need the following
lemma. In the lemma we study images of small angles.
We work in the setting described in the previous chapter.

\begin{lemma} \label{thm:smallangles}
  Suppose $a \in E$, $\iii \in I^*$, $0 < l < d$, $0<\delta<1$,
  $\tfrac{1}{2} \le \roo < 1$, and $V \in G(d,l)$. Then there exists
  $r_0 = r_0(\delta,\roo) > 0$ depending only on $\delta$ and $\roo$
  such that
  \begin{equation*}
    \fii_\iii\bigl( X(a,r,V,\roo\delta) \bigr) \subset X\bigl(
    \fii_\iii(a),||\fii_\iii'||r,\fii_\iii'(a)V,\delta \bigr)
  \end{equation*}
  whenever $0< r <r_0$.
\end{lemma}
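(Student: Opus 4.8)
The plan is to separate the two ingredients of the target cone $X(\fii_\iii(a),\|\fii_\iii'\|r,\fii_\iii'(a)V,\delta)$: membership in the ball $B(\fii_\iii(a),\|\fii_\iii'\|r)$ and the opening condition around $W := \fii_\iii'(a)V$. The ball containment is immediate: since $a \in E \subset X$ and $X(a,r,V,\roo\delta) \subset B(a,r)$, Lemma~\ref{thm:bdpomin}(2) yields $\fii_\iii(X(a,r,V,\roo\delta)) \subset \fii_\iii(B(a,r)) \subset B(\fii_\iii(a),\|\fii_\iii'\|r)$ as soon as $r < \dist(X,\partial\Omega)$. All the work therefore goes into the angle estimate: for $z \in X(a,r,V,\roo\delta)$ I must show $|Q_W(\fii_\iii(z)-\fii_\iii(a))| < \delta^{1/2}|\fii_\iii(z)-\fii_\iii(a)|$.

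The conceptual key is that the linear map $A := \fii_\iii'(a)$ is a similarity. Indeed, $a \in E$ and $E$ is forward invariant, since $\fii_i(\pi(\jjj)) = \pi(i,\jjj)$ gives $\fii_i(E) \subset E$; hence the chain rule writes $A$ as a product of factors $\fii_{i_k}'(y_k)$ with every $y_k \in E$, and by \ref{F3} each factor is a scalar multiple of an orthogonal map, so $A = \lambda O$ with $\lambda = |A| = |\fii_\iii'(a)|$ and $O$ orthogonal. As $\lambda V = V$ we get $W = OV$, and from $Q_W = O Q_V O^{-1}$ a short computation gives $|Q_W(Au)| = \lambda|Q_V u|$ and $|Au| = \lambda|u|$ for every $u$. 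Thus the linearization preserves the cone ratio \emph{exactly}: with $u = z-a$ it keeps $|Q_W A(z-a)|/|A(z-a)| = |Q_V(z-a)|/|z-a| < (\roo\delta)^{1/2}$.

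It then remains to control the deviation of $\fii_\iii$ from its linearization $A$. Writing $\fii_\iii(z)-\fii_\iii(a) = A(z-a) + R$ with $R = \int_0^1\bigl(\fii_\iii'(a+t(z-a)) - A\bigr)(z-a)\,dt$ and applying Proposition~\ref{thm:falconer} to each integrand, I get $|R| \le \tfrac{c}{2}\lambda|z-a|^2 \le \tfrac{c}{2}\lambda r|z-a|$. Bounding the numerator by $\lambda(\roo\delta)^{1/2}|z-a| + |R|$ and the denominator from below by $\lambda|z-a| - |R|$ (note $z\neq a$, so these are positive), I obtain
\begin{equation*}
  \frac{|Q_W(\fii_\iii(z)-\fii_\iii(a))|}{|\fii_\iii(z)-\fii_\iii(a)|}
  \le \frac{(\roo\delta)^{1/2} + \tfrac{c}{2}r}{1 - \tfrac{c}{2}r}.
\end{equation*}
The decisive feature is that the common factor $\lambda = |\fii_\iii'(a)|$ cancels, so the right-hand side depends only on $\delta$, $\roo$, and the fixed constant $c$, not on $\iii$, $a$, or $V$. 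As $r \to 0$ it tends to $(\roo\delta)^{1/2}$, which is strictly below $\delta^{1/2}$ because $\roo < 1$; hence solving $\tfrac{c}{2}r(1+\delta^{1/2}) < \delta^{1/2}(1-\roo^{1/2})$ and intersecting with $\dist(X,\partial\Omega)$ produces an explicit threshold $r_0 = r_0(\delta,\roo)$ below which the ratio stays $< \delta^{1/2}$. I expect the main obstacle to be exactly this uniformity: one must check that the remainder from Proposition~\ref{thm:falconer} and the conformal scaling from \ref{F3} both carry the same factor $|\fii_\iii'(a)|$, which is what lets it drop out and makes $r_0$ independent of the word $\iii$ and the base point.
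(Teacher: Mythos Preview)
Your proof is correct and follows the same overall strategy as the paper: dispose of the ball inclusion via Lemma~\ref{thm:bdpomin}(2), then establish the cone condition by linearizing $\fii_\iii$ at $a$, using that $\fii_\iii'(a)$ is a similarity (from iterating \ref{F3} along the orbit of $a\in E$), and controlling the linearization error with Proposition~\ref{thm:falconer} so that the factor $|\fii_\iii'(a)|$ cancels and $r_0$ is uniform in $\iii$.

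The execution differs in a way worth recording. The paper applies the Mean Value Theorem in the direction $\theta=Q_{V'}(\fii_\iii(x)-\fii_\iii(a))/|Q_{V'}(\fii_\iii(x)-\fii_\iii(a))|$, subtracts off the term $\fii_\iii'(a)(y-a)\in V'$ with $y=P_V(x-a)+a$, and then compares $|\fii_\iii'(a)|\,|x-a|$ with $|\fii_\iii(x)-\fii_\iii(a)|$ through the bounded-distortion function $K(t)\to 1$ of \eqref{eq:BDP}; the final choice of $r_0$ therefore involves both $c$ and the rate at which $K(t)\to 1$. You instead write the Taylor remainder as an integral, make the factorization $\fii_\iii'(a)=\lambda O$ explicit, and read off $|Q_W A u|=\lambda|Q_V u|$ and $|Au|=\lambda|u|$ directly; this lets $\lambda$ cancel in a single line and produces a clean threshold depending only on $c$, $\delta$, and $\roo$, with no appeal to \eqref{eq:BDP}. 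The underlying mechanism is identical, but your version isolates the role of conformality at $a$ more transparently.
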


\begin{proof}
  First of all, choose $r_0 > 0$ small enough such that $r_0 <
  \dist(E,\partial\Omega)$. Then by Lemma \ref{thm:bdpomin}(2) we have
  $\fii_\iii\bigl( B(a,r) \bigr) \subset B\bigl(
  \fii_\iii(a),||\fii_\iii'||r \bigr) \subset \Omega$ for every $0 < r
  < r_0$. Take $0 < r < r_0$ and $x \in X(a,r,V,\roo\delta)$. Denote
  $V' = \fii_\iii'(a)V$, $y = P_V(x-a) + a$, and $\theta =
  Q_{V'}\bigl( \fii_\iii(x) - \fii_\iii(a) \bigr) / \bigl|
  Q_{V'}\bigl( \fii_\iii(x) - \fii_\iii(a) \bigr) \bigr|$. Using the
  Mean Value Theorem, we choose $\ksi \in [x,a]$ such that
  \begin{equation} \label{eq:val}
  \begin{split}
    \bigl|Q_{V'}\bigl( \fii_\iii(x) - \fii_\iii(a) \bigr)\bigr|  &=
    \theta \cdot \bigl( \fii_\iii(x) - \fii_\iii(a) \bigr) \\
    &= \theta \cdot \bigl( \fii_\iii'(\ksi)(x-a) \bigr).
  \end{split}
  \end{equation}
  Since $\fii_\iii'(a)(y-a) \in V'$, we have
  \begin{equation} \label{eq:sa_lasku1}
  \begin{split}
    \bigl| Q_{V'}\bigl( \fii_\iii(x) - \fii_\iii(a) \bigr) \bigr| &=
    \bigl| \theta \cdot \bigl( \fii_\iii(x) - \fii_\iii(a) -
    \fii_\iii'(a)(x-a) \\
    &\qquad - \fii_\iii'(a)(y-a) + \fii_\iii'(a)(x-a)
    \bigr) \bigr| \\
    &\le \bigl| \theta \cdot \bigl( \fii_\iii(x) - \fii_\iii(a) -
    \fii_\iii'(a)(x-a) \bigr) \bigr| \\
    &\qquad + \bigl| \theta \cdot \bigl(
    \fii_\iii'(a)(y-a) - \fii_\iii'(a)(x-a) \bigr) \bigr| \\
    &\le |\fii_\iii'(\ksi)(x-a) - \fii_\iii'(a)(x-a)| +
    |\fii_\iii'(a)(x-y)|
  \end{split}
  \end{equation}
  using \eqref{eq:val} and the Cauchy-Schwartz inequality. Calculating
  as in \eqref{eq:laskelma}, we notice that
  \begin{equation} \label{eq:laskelma2}
    |\fii_\iii'(a)||x-a| \le K(|\fii_\iii^{-1}(\ksi') -
     a|)\, |\fii_\iii(x) - \fii_\iii(a)|,
  \end{equation}
  where $\ksi' \in [\fii_\iii(x),\fii_\iii(a)]$. Observe
  that $|\fii_\iii^{-1}(\ksi') - a| \le
  K_0|\fii_\iii'(a)|^{-1}|\fii_\iii(x) - \fii_\iii(a)| \le K_0^2|x-a|$
  by \eqref{eq:BDP}. Therefore, when $|x-a|$ is small, also
  $|\fii_\iii^{-1}(\ksi') - a|$ is small, and hence, to simplify the
  notation, we may replace in the following 
  $K(|\fii_\iii^{-1}(\ksi') - a|)$ with $K(|x-a|)$.
  Using Proposition \ref{thm:falconer} and \eqref{eq:laskelma2}, we
  obtain
  \begin{equation} \label{eq:sa_lasku2}
  \begin{split}
    |\fii_\iii'(\ksi)(x-a) - \fii_\iii'&(a)(x-a)| \le
    |\fii_\iii'(\ksi) - \fii_\iii'(a)||x-a| \\
    &\le c|\fii_\iii'(a)||\ksi - a||x-a| \\
    &\le cK(|x-a|)\, |\fii_\iii(x) - \fii_\iii(a)||x-a|.
  \end{split}
  \end{equation}
  Using \eqref{eq:laskelma2}, we also have
  \begin{equation} \label{eq:sa_lasku3}
    \frac{|\fii_\iii'(a)(x-y)|}{|\fii_\iii(x) - \fii_\iii(a)|} \le
    K(|x-a|)\frac{|\fii_\iii'(a)||x-y|}{|\fii_\iii'(a)||x-a|} \le
    K(|x-a|)(\roo\delta)^{1/2}
  \end{equation}
  and hence, combining \eqref{eq:sa_lasku1}, \eqref{eq:sa_lasku2}, and
  \eqref{eq:sa_lasku3}, we conclude
  \begin{equation*}
    \frac{\bigl| Q_{V'}\bigl( \fii_\iii(x) - \fii_\iii(a) \bigr)
    \bigr|}{|\fii_\iii(x) - \fii_\iii(a)|} \le K(|x-a|)\bigl(c|x-a| +
    (\roo\delta)^{1/2}\bigr).
  \end{equation*}
  Finally, choosing $r_0 \le
  \delta^{1/2}c^{-1}\bigl(((\roo+1)/2)^{1/2}-\roo^{1/2}\,\bigr)$
  so small such that $K(t) \le (2/(\roo+1))^{1/2}$ for all
  $0 < t \le r_0$, we have finished the proof.
\end{proof}

With this geometrical lemma we are able to study tangents of the limit set
$E$. Let $m$ be a Borel measure
on $E$, $0<l<d$, and $t > 0$. Take $a \in E$ and $V \in G(d,l)$.
We say that $V$ is a \emph{weak $(t,l)$-tangent plane for $E$ at $a$} if
\begin{equation*}
  \liminf_{r \downarrow 0} \frac{m\bigl( B(a,r) \setminus V_a(\delta r)
    \bigr)}{r^t} = 0
\end{equation*}
for all $0<\delta<1$. Observe that this concept does not depend on $m$
if there exists a constant $C > 0$ such that $m\bigl( B(x,r) \bigr)
\ge Cr^t$ for all $x \in E$ and $0<r<r_0$. We also say that $V$ is an
\emph{$l$-tangent plane for $E$ at $a$} if for every $0<\delta<1$
there exists $r > 0$ such that
\begin{equation} \label{eq:inkluusio}
  E \cap B(a,r) \subset X(a,V,\delta).
\end{equation}
Furthermore, the set $E$ is said to be
\emph{uniformly $l$-tangential} if for each $0<\delta<1$ there exists
$r>0$ such that for every point $a \in E$ there is $V \in G(d,l)$ such
that \eqref{eq:inkluusio} holds.
An application of Whitney's Extension Theorem
shows that a uniformly $l$-tangential set is a subset of an
$l$-dimensional $C^1$-submanifold, see Proposition \ref{thm:c1}.

For each $\iii \in I^*$ and $t \ge 0$ the function $\hhh \mapsto
\bigl|\fii_\iii'\bigl( \pi(\hhh) \bigr)\bigr|^t$ defined on $I^\infty$
is a cylinder function satisfying the chain rule, see \cite[Chapter 2]{AK2},
and hence, by the open set condition, \eqref{eq:bc}, and
\cite[Theorems 2.5, 3.7, and 3.8]{AK2}, there exists a 
Borel probability measure $m$ on $E$ such that for each $\iii \in I^*$
\begin{equation} \label{eq:3.x}
  m\bigl( \fii_\iii(E) \bigr) = \int_E |\fii_\iii'(x)|^t dm(x),
\end{equation}
where $t = \dimh(E)$. The measure $m$ is called a \emph{conformal measure}.
See also \cite{JH}, \cite{MU}, and \cite{AK2}.
It can be easily shown that there exists a constant $C>0$ such
that
\begin{equation} \label{eq:puoli_ahlfors}
  m\bigl( B(x,r) \bigr) \ge Cr^t
\end{equation}
for all $x \in E$ and $0<r<r_0$. Namely, take $\iii = (i_1,i_2,\ldots)
\in I^\infty$ such that $\pi(\iii) = x$ and $n$ to be the smallest
integer for which $\fii_{\iii|_n}(E) \subset B(x,r)$. Now, using \ref{F3},
\eqref{eq:BDP}, and Lemma \ref{thm:bdpomin}(3), we obtain
\begin{align*}
  m\bigl( B(x,r) \bigr) &\ge m\bigl( \fii_{\iii|_n}(E) \bigr)
  = \int_E |\fii_{\iii|_n}'(x)|^t dm(x) \\
  &= \int_E \bigl| \fii_{\iii|_{n-1}}'\bigl( \fii_{i_n}(x) \bigr) \bigr|^t
  |\fii_{i_n}'(x)|^t dm(x) \\
  &\ge K_0^{-2t} \min_{i \in I}||\fii_i'||^{t} ||\fii_{\iii|_{n-1}}'||^t \\
  &\ge D^{-t} K_0^{-2t} \min_{i \in I}||\fii_i'||^{t}
  \diam\bigl( \fii_{\iii|_{n-1}}(X) \bigr)^t,
\end{align*}
where $t = \dimh(E)$. The claim follows since the set $\fii_{\iii|_{n-1}}(X)$
is not included in $B(x,r)$. For the inequality to the other
direction, the reader is referred to \cite[proof of Theorem 3.8]{AK2}.

We are now ready to prove the main theorem.

\begin{theorem} \label{thm:tangential}
  Suppose $t = \dimh(E)$ and $0 < l < d$. If a point
  of $E$ has a weak $(t,l)$-tangent plane, then $E$ is uniformly
  $l$-tangential.
\end{theorem}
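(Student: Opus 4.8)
The plan is to rescale the weak tangent plane back to unit size by the generating maps, thereby producing a whole piece of $E$ that lies in a $C^1$ $l$-dimensional surface, and then to spread this over all of $E$ using the self-similar structure. Fix $a=\pi(\iii)\in E$ carrying the weak $(t,l)$-tangent plane $V$. Since widening the slab only decreases $m\bigl(B(a,r)\setminus V_a(\delta r)\bigr)$, a diagonal argument over a sequence $\delta\downarrow 0$ yields one sequence $r_j\downarrow 0$ along which $m\bigl(B(a,r_j)\setminus V_a(\delta r_j)\bigr)=o(r_j^t)$ for every $0<\delta<1$.

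For a fixed small $\rho_0>0$ and each $j$ I choose $n_j$ so that $||\fii_{\iii|_{n_j}}'||$ is comparable to $r_j/\rho_0$, and let $a_{n_j}$ be the point with $\fii_{\iii|_{n_j}}(a_{n_j})=a$. Because $\fii_{\iii|_{n_j}}$ carries the cylinder $\fii_{\iii|_{n_j}}(E)$ bijectively onto $E$, the conformal measure law \eqref{eq:3.x} together with \eqref{eq:BDP} transfers the concentration at scale $r_j$ to scale $\rho_0$: writing $\tilde B_j=\fii_{\iii|_{n_j}}^{-1}(B(a,r_j))$, which by Lemma \ref{thm:bdpomin} is comparable to $B(a_{n_j},\rho_0)$,
\[
  m\bigl(\tilde B_j\setminus \fii_{\iii|_{n_j}}^{-1}(V_a(\delta r_j))\bigr)\le C\,||\fii_{\iii|_{n_j}}'||^{-t}\,m\bigl(B(a,r_j)\setminus V_a(\delta r_j)\bigr)\to 0
\]
for each fixed $\delta$. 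By Proposition \ref{thm:falconer} the set $\fii_{\iii|_{n_j}}^{-1}(V_a(\delta r_j))$ is a neighbourhood of the $C^2$ surface $S_j=\fii_{\iii|_{n_j}}^{-1}(V+\{a\})$ of width comparable to $\delta$, and the same proposition bounds the curvatures of the $S_j$ uniformly in $j$. Passing to a subsequence with $a_{n_j}\to a_*\in E$ and $S_j\to S_*$ in $C^1$ (Arzel\`a--Ascoli), lower semicontinuity gives $m\bigl(B(a_*,\rho_0)\setminus N_\delta\bigr)=0$ for every $\delta$, where $N_\delta$ is a neighbourhood of $S_*$ of width comparable to $\delta$. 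Letting $\delta\downarrow 0$ and using the lower regularity \eqref{eq:puoli_ahlfors} — every ball centred on $E$ has positive measure — upgrades the $m$-null statement to the support inclusion $E\cap B(a_*,\rho_0)\subset S_*$, where $S_*$ is a $C^1$ $l$-dimensional submanifold.

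Since $a_*\in E$ and the cylinders containing $a_*$ shrink to it, I may pick $\kkk\in I^*$ with $\fii_\kkk(X)\subset B(a_*,\rho_0)$; then $\fii_\kkk(E)\subset E\cap B(a_*,\rho_0)\subset S_*$, and hence $E\subset\fii_\kkk^{-1}(S_*)$. Assumption \ref{F1} makes $\fii_\kkk'$ invertible everywhere, so $\fii_\kkk$ is a local $C^2$ diffeomorphism and $\fii_\kkk^{-1}(S_*)$ is, near every point of $E$, a $C^1$ $l$-dimensional submanifold $M$. A compact subset of such an $M$ is uniformly $l$-tangential: taking $V$ to be the tangent plane $T_bM$ at each $b\in E$, continuity of $b\mapsto T_bM$ and compactness of $E$ furnish, for each $\delta$, a single radius $r$ with $E\cap B(b,r)\subset X(b,T_bM,\delta)$ for all $b$, which is the assertion.

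The heart of the argument, and its main difficulty, is the blow-up of the second paragraph. One must (i) choose the cylinder length so that the rescaled balls $\tilde B_j$ keep bounded eccentricity and the surfaces $S_j$ have curvature bounded independently of $j$, for which Proposition \ref{thm:falconer} and Lemma \ref{thm:smallangles} — which controls precisely how the tangential cones transform under $\fii_{\iii|_{n_j}}$ — are the natural tools; (ii) keep the limit point $a_*$ and surface $S_*$ independent of $\delta$ by fixing the scale $\rho_0$ once and for all and sending $\delta\downarrow 0$ only after $j\to\infty$; and (iii) pass from the $m$-null set to the closed support via \eqref{eq:puoli_ahlfors}. Once the single piece $E\cap B(a_*,\rho_0)\subset S_*$ is secured, the self-similar spreading of the third paragraph is routine.
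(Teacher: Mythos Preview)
Your argument is essentially correct but follows a route quite different from the paper's. The paper proceeds by contradiction: assuming uniform $l$-tangentiality fails, it fixes a ``bad'' point $x\in E$ at which every plane $W$ misses a nearby point $y\in E$ by a definite cone angle; it then pushes the pair $x,y$ \emph{forward} under $\fii_{\iii|_k}$ towards the weak-tangent point $a$, using Lemma~\ref{thm:smallangles} to keep the angular separation, and lands a ball of radius $\sim r$ centred on $E$ inside $B(a,r)\setminus V_a(\lambda r)$. The lower regularity \eqref{eq:puoli_ahlfors} alone then contradicts the weak-tangent hypothesis. No inverse maps, no curvature estimates, no compactness of surfaces, and no Jacobian property of $m$ on general Borel sets are needed---only \eqref{eq:puoli_ahlfors} and the cone lemma.

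Your blow-up approach goes in the opposite direction: you pull $B(a,r_j)$ \emph{back} by $\fii_{\iii|_{n_j}}^{-1}$ to a fixed scale, extract a limit $C^1$ surface $S_*$ by Arzel\`a--Ascoli, trap a full cylinder of $E$ on $S_*$, and then spread by $\fii_\kkk^{-1}$. This is conceptually natural and actually buys more than the theorem asks: you obtain $E\subset M$ for a $C^1$ $l$-manifold $M$ directly, so Proposition~\ref{thm:c1} becomes superfluous. The costs are the extra ingredients you invoke: the uniform second-derivative control on $\fii_{\iii|_{n_j}}^{-1}$ (which does follow from Proposition~\ref{thm:falconer} together with the bound $|(\fii_\iii')^{-1}|\le K_0^2||\fii_\iii'||^{-1}$ implied by \eqref{eq:BDP} and \ref{F3}, but deserves a line of justification), and the quasi-invariance $m(\fii_\iii(A))\asymp ||\fii_\iii'||^t m(A)$ for Borel $A\subset E$. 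On this last point your citation of \eqref{eq:3.x} is slightly off: that identity is stated only for $A=E$; you need the full Jacobian relation $m(\fii_\iii(A))=\int_A|\fii_\iii'|^t\,dm$, which is part of the conformal-measure construction in \cite{AK2} but is not what \eqref{eq:3.x} literally says.
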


\begin{proof}
  Let us first sketch the main idea of the proof: Assuming that the
  conclusion fails, so that there exists a
  point $x \in E$ with no tangent, we find for each plane $W$ a point
  $y \in E$ close to $x$ such that the angle between $y-x$ and $W$ is
  large. Since the set $\{ \fii_\iii(x) : \iii \in I^* \}$ is dense in 
  $E$, we are able to, using Lemma \ref{thm:smallangles}, map this
  setting arbitrary close to any given point in $E$. Hence, if $a \in
  E$ has a weak tangent plane $V$, we obtain an immediate contradiction,
  since either the image of $x$ or the image of $y$ is not included in
  a small neighborhood of $V + \{ a \}$ provided that $W$ is chosen
  in the beginning such that the image of $W$ is close to $V$.

  Suppose $a \in E$ has a weak $(t,l)$-tangent plane $V$. Assume on
  the contrary that there is $0<\delta<1$ such that for each $q \in
  \N$ there exists $x_q \in E$ such that for every $W \in G(d,l)$
  \begin{equation} \label{eq:tang_eka}
    E \cap B(x_q,1/q) \setminus X(x_q,W,\delta) \ne \emptyset.
  \end{equation}
  Put $1/(\delta+1) < \roo < 1$ and let $r_0 = r_0(1/\roo-\delta,\roo)
  < \dist(E,\partial\Omega)$ be as in Lemma \ref{thm:smallangles}. Fix
  $q \in \N$ such that $1/q < r_0/2$ and, to simplify the notation,
  denote $x_q$ with $x$. Take $\iii \in I^\infty$ such that
  $\pi(\iii)=a$. Then clearly $\fii_{\iii|_k}(x) \to a$ as $k \to
  \infty$. Setting $A_k = \fii_{\iii|_k}'(x)/|\fii_{\iii|_k}'(x)|$ for all $k
  \in \N$ and using the compactness of $G(d,l)$, we notice
  $\{ A_k^{-1}V \}_{k \in \N}$ has a subsequence converging to some $W \in
  G(d,l)$. Denoting the subsequence as the original sequence and
  setting $W_k = A_kW$, we have $W_k \to V$ as $k \to \infty$.
  Choosing $y \in E \cap B(x,1/q) \setminus X(x,W,\delta)$, we notice
  there exists $0<\eta<1$ depending only on $\delta$ and $\roo$
  such that
  \begin{equation} \label{eq:tang_toka}
    B(y,\eta r') \subset B(x,r') \setminus X(x,W,\roo\delta),
  \end{equation}
  where $r' = 2|x-y|$. Applying Lemma
  \ref{thm:smallangles} we obtain
  \begin{equation} \label{eq:tang_kolmas}
  \begin{split}
    \fii_{\iii|_k}\bigl( B(x,r') &\setminus X(x,W,\roo\delta) \bigr) =
    \fii_{\iii|_k}\bigl( \overline{X(x,r',W^\perp,1-\roo\delta)} \bigr) \\
    &\subset \overline{X\bigl( \fii_{\iii|_k}(x), ||\fii_{\iii|_k}'||r',
    W_k^\perp, (1 - \roo\delta)/\roo \bigr)} \\
    &= B\bigl( \fii_{\iii|_k}(x), ||\fii_{\iii|_k}'||r' \bigr) \setminus
    X\bigl( \fii_{\iii|_k}(x), W_k, \delta-(1/\roo-1) \bigr)
  \end{split}
  \end{equation}
  whenever $k \in \N$.
  Hence, using Lemma \ref{thm:bdpomin}(1),
  \eqref{eq:tang_toka}, and \eqref{eq:tang_kolmas}, we have
  \begin{equation} \label{eq:tang_1}
  \begin{split}
    B\bigl( \fii_{\iii|_k}(y), &\,K_0^{-1}|\fii_{\iii|_k}'(y)|\eta r' \bigr)
    \subset \fii_{\iii|_k}\bigl( B(y,\eta r') \bigr) \\
    &\subset B\bigl( \fii_{\iii|_k}(x),||\fii_{\iii|_k}'||r' \bigr) \setminus
    X\bigl( \fii_{\iii|_k}(x), W_k, \delta-(1/\roo-1) \bigr)
  \end{split}
  \end{equation}
  whenever $k \in \N$.
  Since $W_k \to V$ as $k \to \infty$, we may take $k_0$ large
  enough such that $|Q_{W_k} - Q_V| <
  2^{-1}(\delta-(1/\roo-1))^{1/2}$ whenever $k\ge k_0$.
  Recalling that the set $X(0,V,\delta)$ is an open ball in $G(d,l)$
  centered at $V$ with radius $\delta^{1/2}$, we notice, using the
  triangle inequality, that
  \begin{equation} \label{eq:tang_3}
    X\bigl( \fii_{\iii|_k}(x), V, (\delta-(1/\roo-1))/4 \bigr) \subset
    X\bigl( \fii_{\iii|_k}(x), W_k, \delta-(1/\roo-1) \bigr) 
  \end{equation}
  whenever $k \ge k_0$.

  Let $r>0$ and choose $n$ to be the smallest integer for which
  \begin{equation*}
    ||\fii_{\iii|_n}'|| < D^{-1}r/2.
  \end{equation*}
  By choosing $r>0$ small enough, we may assume that $n \ge k_0$.
  Since by \eqref{eq:tang_1} and \eqref{eq:tang_3}
  \begin{equation*}
  \begin{split}
    B\bigl( \fii_{\iii|_n}(y),K_0^{-1}|\fii_{\iii|_n}'(y)|\eta r' \bigr)
    \subset\; &B\bigl( \fii_{\iii|_n}(x),||\fii_{\iii|_n}'||r' \bigr)
    \setminus \\ &X\bigl( \fii_{\iii|_n}(x), V, (\delta-(1/\roo-1))/4
    \bigr),
  \end{split}
  \end{equation*}
  this choice gives, using \ref{F3} and \eqref{eq:BDP},
  \begin{equation*}
  \begin{split}
    \bigl| Q_V\bigl( \fii_{\iii|_n}&(x) - \fii_{\iii|_n}(y) \bigr)
    \bigr| \ge 2^{-1}(\delta-(1/\roo-1))^{1/2} |\fii_{\iii|_n}(x) -
    \fii_{\iii|_n}(y)| \\
    &\ge 2^{-1}(\delta-(1/\roo-1))^{1/2} K_0^{-1}|\fii_{\iii|_n}'(y)|
    \eta r' \\
    &\ge 2^{-1}(\delta-(1/\roo-1))^{1/2} K_0^{-2}\eta r'
    ||\fii_{\iii|_{n-1}}'|| \min_{i \in I}|\fii_i'(y)| \\
    &\ge 2^{-1}(\delta-(1/\roo-1))^{1/2} K_0^{-2}\eta r' \min_{i \in
    I}|\fii_i'(y)| D^{-1}r/2 \\
    & =: \lambda r,
  \end{split}
  \end{equation*}
  where $\lambda > 0$ does not depend on $r$. Assuming now
  $\dist\bigl( \fii_{\iii|_n}(x) - a, V \bigr) \le \lambda r/2$, we
  have
  \begin{equation*}
  \begin{split}
    \dist\bigl( \fii_{\iii|_n}(y) - a, V \bigr) &\ge \bigl| Q_V\bigl(
    \fii_{\iii|_n}(x) - \fii_{\iii|_n}(y) \bigr) \bigr| - \bigl| Q_V\bigl(
    \fii_{\iii|_n}(x) - a \bigr) \bigr| \\
    & \ge \lambda r - \lambda r/2 = \lambda r/2.
  \end{split}
  \end{equation*}
  Changing the roles of $x$ and $y$ above, we observe that there
  exists $z \in \{ x,y \}$ such that
  \begin{equation*}
    \dist\bigl( \fii_{\iii|_n}(z) - a, V \bigr) \ge \lambda r/2.
  \end{equation*}
  Since by Lemma \ref{thm:bdpomin}(3)
  \begin{equation*}
  \begin{split}
    \dist\bigl( \fii_{\iii|_n}(z) - a, V \bigr) &\le
    |\fii_{\iii|_n}(z) - a|
    \le \diam\bigl( \fii_{\iii|_n}(X) \bigr) \\
    &\le D||\fii_{\iii|_n}'|| < r/2,
  \end{split}
  \end{equation*}
  we have
  \begin{equation*}
    B\bigl( \fii_{\iii|_n}(z),\lambda r/8 \bigr) \subset B(a,r)
    \setminus V_a(\lambda r/8).
  \end{equation*}
  Therefore, using \eqref{eq:puoli_ahlfors},
  \begin{equation*}
    m\bigl( B(a,r) \setminus V_a(\lambda r/8) \bigr) \ge
    C (\lambda/8)^t r^t
  \end{equation*}
  for all $r>0$. This contradicts the assumption that $V$ is a weak
  $(t,l)$-tangent plane of $E$ at $a$.
\end{proof}

Let us next discuss applications of this theorem. At first,
we study uniformly $l$-tangential sets of $\R^d$.
Our aim is to embed each such a set into a $C^1$-submanifold.

\begin{proposition} \label{thm:c1}
  If $0 < l < d$ and a closed set $A \subset \R^d$ is
  uniformly $l$-tangential, then $A$ is a 
  subset of an $l$-dimensional $C^1$-submanifold.
\end{proposition}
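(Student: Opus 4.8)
The plan is to represent $A$ locally as the graph of a $C^1$ map over an $l$-dimensional plane, to produce these graphs by Whitney's $C^1$ extension theorem, and finally to glue them into a single submanifold by a partition of unity. As above, $P_V$ and $Q_V$ denote the orthogonal projections onto $V\in G(d,l)$ and onto $V^\perp$.

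First I would attach to each point a tangent plane. Fix $a\in A$. For every $0<\delta<1$, uniform $l$-tangentiality yields a plane with $A\cap B(a,r)\subset X(a,V,\delta)$; since $X(a,V,\delta)$ consists exactly of the points whose displacement from $a$ meets $V$ at angle less than $\arcsin\delta^{1/2}$, this says that near $a$ the set $A$ hugs $V$. Letting $\delta\downarrow0$ along a sequence and using the compactness of $G(d,l)$, I would extract a limiting plane $V(a)$ that is a genuine first-order tangent:
\[
  \dist\bigl(x-a,V(a)\bigr)\le\eps(s)\,|x-a|\quad\text{for } x\in A,\ 0<|x-a|\le s,
\]
where $\eps(s)\to0$ as $s\downarrow0$, the rate being uniform in $a$ because the radius $r$ in the definition depends only on $\delta$.

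The decisive step, and the one I expect to be hardest, is to arrange that $a\mapsto V(a)$ is continuous and that, over $V(a)$, the piece $A\cap B(a,r_a)$ is a single-valued graph. The difficulty is that a first-order tangent plane need not be unique where $A$ is locally lower-dimensional (a segment in $\R^3$, say, admits many tangent $2$-planes), so an arbitrary choice of $V(a)$ may jump, and the uniform cone condition must be used to pin the field down. Concretely I would aim to show that for $x,y\in A\cap B(a,r_a)$ one has $|Q_{V(a)}(x-y)|\le C\delta^{1/2}|x-y|$, which forces $P_{V(a)}$ to be injective there and exhibits $A\cap B(a,r_a)$ as the graph of a map $g_a$ on $U_a:=P_{V(a)}\bigl(A\cap B(a,r_a)\bigr)$ with small Lipschitz constant; the same estimate, applied at every scale, identifies the first-order behaviour of $g_a$ with the linear map whose graph is $V(x)$. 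Controlling $d\bigl(V(x),V(a)\bigr)$ for $x$ near $a$ — equivalently, proving continuity of the field — is exactly where the uniform, scale-independent cone condition, rather than a merely pointwise one, is indispensable.

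Granting the graph structure, I would invoke Whitney's $C^1$ extension theorem: the map $g_a$ on the closed set $U_a$, together with the candidate derivatives read off from $V(\cdot)$, satisfies the first-order Whitney condition (uniform smallness of the Taylor remainder, which is precisely the tangency estimate) and has continuous derivative data (the continuity of the field). Whitney's theorem then extends $g_a$ to a $C^1$ map $\tilde g_a$ on $V(a)$ whose graph $M_a$ is an $l$-dimensional $C^1$-submanifold containing $A\cap B(a,r_a)$ with tangent plane $V(x)$ at each $x\in A$. Finally, since the tangent planes agree along $A$ for overlapping charts, I would take a locally finite cover of the closed set $A$ by such balls and glue the local graphs by a subordinate partition of unity (equivalently, carry out one global Whitney extension of the $1$-jet field $a\mapsto(a,V(a))$), obtaining a single $l$-dimensional $C^1$-submanifold containing $A$.
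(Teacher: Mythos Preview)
Your approach is essentially the paper's: attach a tangent plane $V_a$ to each $a\in A$, establish that $V_x$ is close to $V_a$ for $x$ near $a$ so that $A\cap B(a,r_0)\subset X(x,V_a,1/2)$ and hence $P_{V_a}$ is bi-Lipschitz on $A\cap\overline{B}(a,r_0)$, and then apply Whitney's extension theorem to the graph map $P_{V_a}^{-1}\colon P_{V_a}\bigl(A\cap\overline{B}(a,r_0)\bigr)\to V_a^\perp$. The paper carries out the closeness step concretely---a short contradiction from the mutual inclusions $x\in X(a,V_a,1/32)$ and $a\in X(x,V_x,1/32)$ forces $d(V_x,V_a)<1/\sqrt{8}$---whereas you are more explicit about supplying the Whitney jet data and about the partition-of-unity gluing of the local charts, which the paper leaves implicit.
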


\begin{proof}
  Take $a \in A$ and denote the $l$-tangent plane associated to a
  point $x \in A$ with $V_x$. We shall prove that there exists $r_0>0$
  not depending on $a$ such that
  \begin{equation} \label{eq:c1_goal}
    A \cap B(a,r_0) \subset X(x,V_a,1/2)
  \end{equation}
  whenever $x \in A \cap B(a,r_0)$. From this the claim
  follows by applying Whitney's Extension Theorem to the bi-Lipschitz
  mapping $P_{V_a}^{-1} \colon P_{V_a}\bigl( A \cap
  \overline{B}(a,r_0) \bigr) \to V_a^\bot$ (we identify $\R^d$ with
  the direct sum $V_a + V_a^\bot$).
  To prove \eqref{eq:c1_goal}, we shall first show that there exists
  $r_1 > 0$ such that
  \begin{equation} \label{eq:c1_1}
    d(V_x,V_a) < 1/8^{1/2}
  \end{equation}
  for every $x \in A \cap B(a,r_1)$. Suppose this is not true. Then
  with any choice of $r>0$ there is $x \in A \cap B(a,r)$ for which
  $d(V_x,V_a) \ge 1/8^{1/2}$. Recalling that the set $X(0,V,\delta)$
  is an open ball in $G(d,l)$ centered at $V$ with radius
  $\delta^{1/2}$, we infer
  \begin{equation*}
    X(0,V_x,1/32) \cap X(0,V_a,1/32) = \emptyset.
  \end{equation*}
  Hence $x \notin X(a,V_a,1/32)$ or $a \notin
  X(x,V_x,1/32)$. According to the assumptions, both cases are clearly
  impossible provided that $r>0$ is chosen small enough.

  Observe that \eqref{eq:c1_1} implies immediately that
  \begin{equation*}
    X(x,V_x,1/8) \subset X(x,V_a,1/2)
  \end{equation*}
  whenever $x \in A \cap B(a,r_1)$. Using the assumptions, we choose
  $r_2>0$ such that
  \begin{equation*}
    A \cap B(x,r_2) \subset X(x,V_x,1/8).
  \end{equation*}
  Now, defining $r_0 = \min\{ r_1,r_2/2 \}$, we have shown
  \eqref{eq:c1_goal} and therefore finished the proof.

\end{proof}

The generalizations for Theorems \ref{thm:mayerurbanski} and
\ref{thm:kaenmaki} are now straightforward.

\begin{corollary}
  Suppose $l = \dimt(E)$. Then either

  (1) $\dimh(E) > l$ or

  (2) $E$ is contained in an $l$-dimensional $C^1$-submanifold.
\end{corollary}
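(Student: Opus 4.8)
The plan is to assume that alternative (1) fails and then produce alternative (2) by feeding a weak tangent plane first into Theorem~\ref{thm:tangential} and then into Proposition~\ref{thm:c1}. So suppose $\dimh(E) \le l$. Since the topological dimension of a separable metric space never exceeds its Hausdorff dimension, $l = \dimt(E) \le \dimh(E) \le l$, so that $t := \dimh(E) = l$ and $t$ is a positive integer. The degenerate cases $l = 0$ and $l = d$, which fall outside the scope of Theorem~\ref{thm:tangential} and Proposition~\ref{thm:c1}, I would dispose of separately, where alternative (2) is immediate.

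First I would record that the lower bound \eqref{eq:puoli_ahlfors}, $m\bigl( B(x,r) \bigr) \ge Cr^t$, forces $\HH^t(E) < \infty$ by the standard density comparison: a set on which a finite Borel measure has uniformly positive lower $t$-density carries finite $\HH^t$ measure. Thus $E$ has finite $\HH^l$ measure with $l$ an integer, placing us in the setting of the Besicovitch--Federer structure theory; moreover, since $m$ satisfies both \eqref{eq:puoli_ahlfors} and the matching upper bound, it is comparable to $\HH^t$ on $E$, so I am free to verify the weak-tangent condition using $\HH^t$.

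The crux is to exhibit a single point of $E$ carrying a weak $(t,l)$-tangent plane, and I would obtain it from the decomposition of $E$ into its $l$-rectifiable part $E_r$ and its purely $l$-unrectifiable part. If $\HH^l(E_r) > 0$, then at $\HH^l$-almost every $a \in E_r$ the set $E_r$ has an approximate tangent $l$-plane $V$ and density one, while the general upper bound $\Theta^{*l}(E,a) \le 1$ forces $E \setminus E_r$ to have vanishing density at $a$; together these give $\HH^t\bigl( E \cap B(a,r) \setminus X(a,V,\delta^2) \bigr) = o(r^t)$. Because inside $B(a,r)$ the slab complement $B(a,r) \setminus V_a(\delta r)$ is contained in the cone complement $B(a,r) \setminus X(a,V,\delta^2)$, this is exactly the weak $(t,l)$-tangent condition for $E$ at $a$. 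It therefore suffices to exclude the case that $E$ is purely $l$-unrectifiable, and this is the only place where the hypothesis $\dimt(E) = l$ is used: a purely $l$-unrectifiable set of finite $\HH^l$ measure projects to an $\HH^l$-null set in almost every direction by the Besicovitch--Federer projection theorem, whereas a compact set of topological dimension $l$ must project onto a set of positive $\HH^l$ measure for a nonnegligible family of directions. I expect balancing these two facts to be the main obstacle, and the only step where the integer value of $l$ and the finiteness of $\HH^l(E)$ genuinely enter; morally it amounts to the reduction that, were $E$ not contained in any $C^1$-submanifold, the companion corollary would give $\HH^t(E \cap M) = 0$ for every such $M$, contradicting $\dimt(E) = l$.

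Finally, with a weak $(t,l)$-tangent plane secured at some $a \in E$, Theorem~\ref{thm:tangential} shows that $E$ is uniformly $l$-tangential, and Proposition~\ref{thm:c1} — applicable since $E$ is compact and hence closed — yields that $E$ is contained in an $l$-dimensional $C^1$-submanifold. This is alternative (2), completing the dichotomy.
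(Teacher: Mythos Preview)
Your overall strategy---assume $\dimh(E)\le l$, produce a weak $(t,l)$-tangent plane at some point of $E$, then invoke Theorem~\ref{thm:tangential} and Proposition~\ref{thm:c1}---is exactly the paper's. The paper, however, does not argue the existence of a weak tangent via the Besicovitch--Federer decomposition; it simply quotes \cite[Lemma~2.1]{VMU} (noting that only the conformal measure, not conformality of the maps, is needed there) together with $\HH^t(E)>0$. Your attempt to unpack that citation through rectifiability theory is reasonable, and the reduction to excluding the purely $l$-unrectifiable case is the right target.

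The gap is precisely where you flag it. The assertion that a compact set with $\dimt(E)=l$ projects to a set of positive $\HH^l$-measure for a \emph{positive-measure} family of $l$-planes---so as to collide with the ``almost every direction'' conclusion of the Besicovitch--Federer projection theorem---is true but is itself a nontrivial input from dimension theory: one needs, for instance, that $\dimt(E)\ge l$ forces some projection $P_V|_E$ to have a stable value, hence $P_V(E)$ to have nonempty interior in $V$, and then that the set of such $V$ is open in $G(d,l)$. You have not supplied this, and your fallback ``moral'' argument via the companion corollary is circular, since that corollary is itself derived from Theorem~\ref{thm:tangential} through an analogous external lemma. Either cite the requisite dimension-theoretic fact explicitly, or---as the paper does---invoke \cite[Lemma~2.1]{VMU} directly.
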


\begin{proof}
  The claim follows from \cite[Lemma 2.1]{VMU}, Theorem
  \ref{thm:tangential}, Proposition \ref{thm:c1}, and the fact that
  $\HH^t(E) > 0$ as $t = \dimh(E)$ (see \cite[Theorem 3.8]{AK2}).
  Observe that in \cite[Lemma 2.1]{VMU} one does not need the mappings
  $\fii_i$ to be conformal. The existence of the conformal measure
  will suffice.
\end{proof}

\begin{corollary}
  Suppose $t = \dimh(E)$ and $0<l<d$. Then either

  (1) $\HH^t(E \cap M)=0$ for every $l$-dimensional
  $C^1$-submanifold $M \subset \R^d$ or

  (2) $E$ is contained in an $l$-dimensional $C^1$-submanifold.
\end{corollary}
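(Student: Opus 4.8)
The plan is to argue by contraposition: supposing that alternative~(1) fails, I will produce a single point of $E$ carrying a weak $(t,l)$-tangent plane and then feed this into the machinery already developed. If~(1) is false, there is an $l$-dimensional $C^1$-submanifold $M \subset \R^d$ with $\HH^t(E \cap M) > 0$. Once I exhibit a point $a \in E$ and a plane $V \in G(d,l)$ that is a weak $(t,l)$-tangent plane for $E$ at $a$, Theorem~\ref{thm:tangential} forces $E$ to be uniformly $l$-tangential, and since $E$ is compact, hence closed, Proposition~\ref{thm:c1} then embeds $E$ into an $l$-dimensional $C^1$-submanifold, which is precisely alternative~(2). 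Thus the whole corollary reduces to manufacturing one good point.

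To find it, I first record that $m$ is Ahlfors $t$-regular on $E$: the lower bound is \eqref{eq:puoli_ahlfors}, and the matching upper estimate $m\bigl(B(x,r)\bigr) \le C'r^t$ is the content of \cite[proof of Theorem 3.8]{AK2}. From these two bounds one gets in the standard way that $\HH^t(E) < \infty$ and that $m$ is comparable to $\HH^t|_E$, so in particular $m(A) \le c\,\HH^t(A)$ for every Borel set $A \subset E$. Since $\HH^t(E) < \infty$, the set $F = E \setminus M$ has finite $\HH^t$-measure, and the classical density theorem for Hausdorff measures yields $\lim_{r \downarrow 0} \HH^t\bigl( F \cap B(a,r) \bigr)/r^t = 0$ at $\HH^t$-almost every $a \notin F$. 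As $\HH^t(E \cap M) > 0$, I may select such a point $a$ lying in $E \cap M$, and I let $V \in G(d,l)$ be the translated tangent space $T_aM$ of $M$ at $a$.

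It then remains to verify that $V$ is a weak $(t,l)$-tangent plane for $E$ at $a$. Fix $0<\delta<1$. Because $M$ is $C^1$ with tangent plane $V$ at $a$, we have $|Q_V(y-a)| = o(|y-a|)$ as $M \ni y \to a$, so there is $\roo_\delta > 0$ with $M \cap B(a,r) \subset V_a(\delta r)$ for all $0 < r < \roo_\delta$. For such $r$, any point of $E \cap B(a,r)$ lying outside $V_a(\delta r)$ must belong to $F = E \setminus M$, whence $m\bigl( B(a,r) \setminus V_a(\delta r) \bigr) \le m\bigl( F \cap B(a,r) \bigr) \le c\,\HH^t\bigl( F \cap B(a,r) \bigr)$. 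By the choice of $a$ the right-hand side is $o(r^t)$, so $\liminf_{r \downarrow 0} m\bigl( B(a,r) \setminus V_a(\delta r) \bigr)/r^t = 0$. Since $\delta$ was arbitrary, $V$ is a weak $(t,l)$-tangent plane for $E$ at $a$, and the corollary follows as outlined in the first paragraph.

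The only genuinely delicate point is the density-theoretic step that converts \emph{positive $\HH^t$-measure on a $C^1$-submanifold} into \emph{existence of a weak tangent plane for $E$}; its two ingredients are the vanishing of the density of $E \setminus M$ at typical points of $E \cap M$ and the $C^1$-approximation $M \cap B(a,r) \subset V_a(\delta r)$. The remaining items, namely the finiteness of $\HH^t(E)$ and the comparability of $m$ with $\HH^t|_E$, are routine consequences of the Ahlfors regularity of $m$, and all the substantive geometry has already been absorbed into Theorem~\ref{thm:tangential} and Proposition~\ref{thm:c1}.
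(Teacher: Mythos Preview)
Your argument is correct and follows the same logical skeleton as the paper: obtain a weak $(t,l)$-tangent plane at one point of $E$ from the hypothesis $\HH^t(E\cap M)>0$, then invoke Theorem~\ref{thm:tangential} and Proposition~\ref{thm:c1}. The paper delegates the first step to \cite[Lemma~2.2]{AK1}, whereas you carry it out explicitly via Ahlfors regularity of $m$, the Hausdorff density theorem applied to $F=E\setminus M$, and the $C^1$ tangency $M\cap B(a,r)\subset V_a(\delta r)$; this is precisely the content one expects that cited lemma to encapsulate, so the approaches coincide.
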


\begin{proof}
  The claim follows from \cite[Lemma 2.2]{AK1}, Theorem
  \ref{thm:tangential}, and Proposition \ref{thm:c1}.
  Observe that in \cite[Lemma 2.2]{AK1} one does not need the mappings
  $\fii_i$ to be conformal. The existence of the conformal measure
  will suffice.
\end{proof}

\begin{ack}
  The author expresses his gratitude to Professor Pertti Mattila for
  his valuable comments on the manuscript.
\end{ack}

\bibliographystyle{abbrv}
\bibliography{rigidity.bib}

\end{document}